\title{\textbf{On the $ICSPC$-property of finite subgroups}\thanks{\footnotesize \scriptsize\emph{E-mail address:}
      zsmcau@cau.edu.cn\,(S. Zhang).}}
\author{Shengmin Zhang\\
\quad
\\
{\small College of Science,
China Agricultural University,
Beijing 100083, China}}
\date{}
\newtheorem{theorem}{Theorem}[section]
\newtheorem{lemma}[theorem]{Lemma}
\theoremstyle{definition}
\newtheorem{definition}[theorem]{Definition}
\begin{document}
\maketitle
\begin{abstract}
Let $G$ be a finite group and $H$ be a subgroup of $G$. Then $H$ is said to be a $p$-$CAP$-subgroup of $G$, if $H$ covers or avoids any $pd$-chief factor of $G$. Furthermore, $H$ is said to be a strong $p$-$CAP$-subgroup of $G$, if for any $H \leq K \leq G$, $H$ is a $p$-$CAP$-subgroup of $K$. A subgroup $L$ is called an $ICSPC$-subgroup of $G$, if $[L,G] \cap L \leq L_{spcG}$, where $L_{spcG}$ denotes a strong $p$-$CAP$-subgroup of $G$ contained in $L$. In this paper, we investigate the structure of $G$ under the assumption that certain subgroups of $G$ are $ICSPC$-subgroups of $G$. Characterizations for $p$-nilpotency and solvably saturated formation are obtained. We also get several criteria for the structure of $G$ under the assumption that certain subgroups of $G$ are $ICSPC$-subgroups of $G$.

\!\!\!\!\!\!\!\!\!\textbf{MSC}:20D10, 20D20
\end{abstract}

\section{Introduction}\label{00001}
All groups considered in this paper will be finite, and $G$ always denotes a finite group. Let $H$ be a subgroup of $G$. Then $H$ is said to satisfy the covering-avoidance property in $G$ (or $H$ is said to be a $CAP$-subgroup of $G$), if for any chief $G$-factor $L/K$, either $L H = K H$ or $L \cap H = K \cap H$. This concept was first introduced by W. Gasch{\"u}tz in {{\cite{GA}}}. Many researchers have investigated the influence of the covering-avoidance property in the structure of $G$ in  \cite{GI,GS,TO}. In {{\cite{BB}}}, A. Ballester-Bolinches {\it et  al.} generalized the basic definition of covering-avoidance property and introduced the concept of strong covering-avoidance property. We list the concept as follows.
\begin{definition}[{{\cite{BB}}}]
A subgroup $H$ of $G$ is called a strong $CAP$-subgroup of $G$, if for any $H \leq K \leq G$, $H$ is a $CAP$-subgroup of $K$. 
\end{definition}
This way of generalization inspires us to make some creations. Another way of generalization of the covering-avoidance property was introduced in {{\cite{FG}}} and was further researched by X. He and Y. Wang in {{\cite{HW}}} and Z. Gao {\it  et al} in {{\cite{GQ}}}. Actually, they gave the following definition.
\begin{definition}[{{\cite{FG}}}]
A subgroup $H$ of a group $G$ is called a $p$-$CAP$-subgroup of $G$ if $H$ covers or avoids every $pd$-chief factor of $G$, where $pd$-chief factor denotes the chief factor of $G$ with order divided by $p$.
\end{definition}
In the paper of X. He and Y. Wang {{\cite{HW}}}, they gave some characterizations for $p$-supersolvability, $p$-nilpotency under the assumption that certain subgroups of $G$ are $p$-$CAP$-subgroups of $G$, sometimes the condition that $G$ is $p$-soluble was needed. In {{\cite{GQ}}}, Z. Gao {\it et al.} gave some equivalent conditions for a group $G$ to be $p$-solvable which are related to $p$-$CAP$-subgroups. 

Now we will introduce a series of concepts which seem to have no relationship with the covering-avoidance property. Now let $H$ be a subgroup of $G$. Then $[H,G]$ denotes the commutator group of $H$ and $G$, and $H^G$ denotes the normal closure of $G$ in $H$, i.e. the smallest normal subgroup of $G$ which contains $H$. It is easy to find that $[H,G]$ is normal in $G$, and $H [H,G] =H^G$. It is natural to to consider the intersection of $H$ and $[H,G]$ since $H^G$ and $[H,G]$ have a strong relationship to the structure of $G$. In fact, we can restrict the intersection of $H$ and $[H,G]$ into certain subgroups of $H$, and get some concepts related to the properties of the intersection. For instance, Y. Gao an X. Li {{\cite{GL}}} introduced the definition as follows.
\begin{definition}[{{\cite{GL}}}]
A subgroup $H$ of $G$ is called an $IC \Phi$-subgroup of $G$, if $H \cap [H,G] \leq \Phi (H)$, where $\Phi (H)$ denotes the Frattini subgroup of $H$.
\end{definition}
Then inspiration of new structure comes to our mind. Instead of letting the restriction of the intersection be the Frattini subgroup, can we change the restriction into a subgroup of $H$ which is related to the $CAP$-subgroups or $p$-$CAP$-subgroups? Actually, it was Y. Gao and X. Li in {{\cite{GL3}}} who first combined the concepts of covering-avoidance property and $IC$-property. In their paper, they introduced the following definition.
\begin{definition}[{{\cite{GL3}}}]
$A$ subgroup $H$ of a group $G$ is called an $ICC$-subgroup of $G$, if $H \cap [H, G] \leq  H_{cG}$, where $H_{cG}$ is a $CAP$-subgroup of $G$ contained in $H$.
\end{definition}[{{\cite{ZSM2}}}]
Now the inspiration immediately comes to our mind. What will happen if we change the $CAP$-subgroup into another covering-avoidance subgroup? In fact, we change the $CAP$-subgroup into $p$-$CAP$-subgroup in {{\cite{ZSM2}}}, and introduced the following definition.
\begin{definition}[{{\cite{ZSM2}}}]
A subgroup $H$ of $G$ is said to be an $ICPC$-subgroup of $G$, if $H \cap [H,G] \leq H_{pcG}$, where $H_{pcG}$ denotes a $p$-$CAP$-subgroup of $G$ contained in $H$.
\end{definition}
In this paper, we obtained the characterization of $p$-nilpotency, solvably saturated formation and other results which are closely related to the structure of $G$ under the assumption that certain subgroups are $ICPC$-subgroups of $G$. Recall that a saturated formation $\mathfrak{F}$ is said to be solvably saturated, if $G \in \mathfrak{F}$ whenever $G / \Phi^{*} (G) \in \mathfrak{F}$. This concept was first introduced by W. Guo and A. N. Skiba {{\cite{GS2}}}.
Now the way to generalize the concept of $IC$-property has been listed above. We just need a generalization of $CAP$-groups or $p$-$CAP$-groups. Actually, we introduced the following definition in {{\cite{ZSM}}}.
\begin{definition}[{{\cite{ZSM}}}]
A subgroup $H$ of a group $G$ is called a strong $p$-$CAP$-subgroup of $G$ if for any subgroup $H \leq K \leq G$, $H$ is a $p$-$CAP$-subgroup of $K$. 
\end{definition}
So we'd like to combine the $IC$-property and strong $p$-$CAP$-subgroups just like what we did in our former paper {{\cite{ZSM2}}}. Hence we introduce the following concept by the inspiration above.
\begin{definition}
A subgroup $H$ of $G$ is said to be an $ICSPC$-subgroup of $G$, if $H \cap [H,G] \leq H_{spcG}$, where $H_{spcG}$ denotes a strong $p$-$CAP$-subgroup of $G$ contained in $H$.
\end{definition}
In this paper, we investigate the basic properties of $ICSPC$-subgroups, and obtained some characterizations for the structure of $G$ under the assumption that certain subgroups of $G$ are $ICSPC$-subgroups of $G$. Since an $ICSPC$-subgroup of $G$ is actually an $ICPC$-subgroup of $G$, some characterizations of solvably saturated formation and  $p$-nilpotency are obtained directly by {{\cite{ZSM2}}}. Our main results are listed as follows.
\begin{theorem}\label{3.3}
Let $N$ be a normal subgroup of $G$ and $P$ a Sylow $p$-subgroup of $N$, where $p \in \pi (N)$. Assume that every cyclic subgroup of $P$ of order $p$ or $4$ (if $P$ is a non-abelian $2$-group) is an
$ICSPC$-subgroup of $G$. Then $N \leq Z_{p\mathfrak{U}} (G)$.
\end{theorem}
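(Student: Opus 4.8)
The plan is to argue by a minimal counterexample $G$, chosen so that $|G|+|N|$ is as small as possible, and to collapse the whole statement down to a single chief factor sitting at the bottom of $N$ by passing to a minimal normal subgroup $R$ of $G$ with $R\le N$. The engine driving everything is the sharp dichotomy that the $ICSPC$-condition imposes on a cyclic subgroup $H=\langle x\rangle$ of order $p$: since the only subgroups of $H$ are $1$ and $H$, the inequality $H\cap[H,G]\le H_{spcG}$ forces either $H\cap[H,G]=1$, or $H\cap[H,G]=H$, in which case $H_{spcG}=H$ and hence $H$ is itself a strong $p$-$CAP$-subgroup of $G$. In the latter situation $H$ covers or avoids every $pd$-chief factor, and because $|H|=p$, whenever $H$ covers a $pd$-chief factor $L/K$ with $H\le L$ and $H\not\le K$ one gets $L/K\cong H/(H\cap K)$, so $|L/K|=p$. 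This ``covering forces order $p$'' mechanism is what manufactures cyclic chief factors.

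First I would prove the core step: if $R$ is a minimal normal subgroup of $G$ contained in $N$ with $p\mid|R|$, then $|R|=p$. Since $R\cap P$ is a nontrivial Sylow $p$-subgroup of $R$, I pick $x\in R\cap P$ of order $p$ and set $H=\langle x\rangle\le P$. From $x\in R\trianglelefteq G$ we get $[H,G]\le R$, and as $[H,G]$ is normal in $G$ and lies inside the minimal normal subgroup $R$, either $[H,G]=1$ or $[H,G]=R$. If $[H,G]=1$ then $\langle x\rangle$ is central, hence normal in $G$, and minimality gives $R=\langle x\rangle$ of order $p$. If $[H,G]=R$ then $H\cap[H,G]=H$, so by the dichotomy $H$ is a strong $p$-$CAP$-subgroup of $G$; it cannot avoid the chief factor $R/1$ (that would force $H=H\cap R\le 1$), so it covers it, and covering forces $R=\langle x\rangle$ of order $p$. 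In particular a non-abelian minimal normal subgroup with $p\mid|R|$ is impossible, and every such $R$ satisfies $R\le Z_{p\mathfrak{U}}(G)$; the same containment is trivial when $R$ is a $p'$-group, which has no $pd$-chief factors at all.

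With the core in hand I would close the induction. For the chosen minimal normal subgroup $R\le N$ the previous step gives $R\le Z_{p\mathfrak{U}}(G)$. I then pass to $\bar G=G/R$ with $\bar N=N/R$ and Sylow $p$-subgroup $PR/R$, invoke the inheritance of the hypothesis to this quotient, and apply minimality of the counterexample to obtain $N/R\le Z_{p\mathfrak{U}}(G/R)$. Combining the standard identity $Z_{p\mathfrak{U}}(G/R)=Z_{p\mathfrak{U}}(G)/R$, valid because $R\le Z_{p\mathfrak{U}}(G)$, with the containment $R\le Z_{p\mathfrak{U}}(G)$ yields $N\le Z_{p\mathfrak{U}}(G)$, contradicting the choice of $G$.

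The delicate point, and the place where the clause about cyclic subgroups of order $4$ is indispensable, is the inheritance of the hypothesis to $G/R$. When $R$ is a $p'$-group a cyclic subgroup of order $p$ of $PR/R$ lifts to one of order $p$ in $P$, and one only needs that the homomorphic image of an $ICSPC$-subgroup is again an $ICSPC$-subgroup; I would isolate this as a basic quotient property of $ICSPC$-subgroups and of strong $p$-$CAP$-subgroups. When $R$ has order $p$, however, a cyclic subgroup of order $p$ of $PR/R$ need not lift to an element of order $p$ in $P$: for $p=2$ its preimage may be forced to have order $4$, which is precisely why the order $4$ subgroups are written into the hypothesis. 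Making this lifting precise, matching the order $p$ and order $4$ generators upstairs with the cyclic subgroups of order $p$ (and $4$) of the quotient $p$-group $PR/R$ and transporting their $ICSPC$-property across the quotient map, is the main obstacle, and I expect it to rest on a separate lemma on the behaviour of $ICSPC$-subgroups under homomorphic images recorded among the basic properties.
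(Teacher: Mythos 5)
Your core step is correct and is a nice self-contained observation: for $H=\langle x\rangle$ of order $p$ inside a minimal normal subgroup $R$, the containment $[H,G]\le R$ forces $[H,G]=1$ or $[H,G]=R$, and in the second case the $ICSPC$-condition upgrades $H$ to a strong $p$-$CAP$-subgroup, so covering/avoiding $R/1$ pins $|R|=p$. This dichotomy is the same engine the paper runs in its Step 4. The problem is the step you yourself flag as the "main obstacle": inheritance of the hypothesis by $G/R$. That step is not a technical loose end that a basic lemma will close; it is false as a general mechanism, and no lemma of the required kind exists in the paper. The paper's quotient lemma for $ICSPC$-subgroups (Lemma \ref{2}) requires $(|H|,|N|)=1$, i.e.\ it only covers quotients by $p'$-groups. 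When $R$ is a $p$-group the transfer breaks in two independent ways. First, for odd $p$ the lifting problem is fatal and the order-$4$ clause cannot help (it only exists for $p=2$): take $P$ cyclic of order $p^2$ and $R$ its unique subgroup of order $p$; then the only subgroup of $P$ of order $p$ is $R$ itself, which maps to the identity of $P/R$, so the order-$p$ subgroup of $P/R$ is not the image of any order-$p$ subgroup of $P$, and the hypothesis says nothing about cyclic subgroups of order $p^2$. Second, even for $p=2$, where an order-$2$ subgroup of $P/R$ lifts to $\langle x\rangle$ of order $4$, knowing $\langle x\rangle$ is an $ICSPC$-subgroup of $G$ does not yield that $\langle x\rangle R/R$ is an $ICSPC$-subgroup of $G/R$: without coprimality one cannot control $\bigl(\langle x\rangle R/R\bigr)\cap[\langle x\rangle R/R,G/R]$ by $\bigl(\langle x\rangle\cap[\langle x\rangle,G]\bigr)R/R$, which is exactly why Lemma \ref{2} carries the coprimality assumption.

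The paper's proof is structured precisely to avoid this quotient. Its induction never passes to $G/R$ for a minimal normal $p$-subgroup $R$; the only quotient taken is by $O_{p'}(N)$ (coprime, so Lemma \ref{2} applies). All other reductions go \emph{downward}, where the hypothesis transfers trivially or by Lemma \ref{1}(2): to $N$ itself viewed as a group (giving $N=G$), and to maximal normal subgroups $K\lhd G$ (a cyclic subgroup of $P\cap K$ of order $p$ or $4$ is already a cyclic subgroup of $P$, so the pair $(G,K)$ inherits the hypothesis with $|G|+|K|$ smaller). This yields that $Z_{p\mathfrak{U}}(G)$ is the unique maximal normal subgroup of $G$, and the contradiction is then extracted at the \emph{top} chief factor $G/Z_{p\mathfrak{U}}(G)$, using your same strong $p$-$CAP$ dichotomy together with structural input from Huppert (Chapter VI, Theorems 14.3 and 5.5) to locate an element of order $p$ or $4$ outside $Z(G)$ and force $[\langle x\rangle,G]=G$. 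If you want to repair your write-up, you should invert the direction of your induction in the same way: restrict the hypothesis downward rather than push it through quotients by $p$-subgroups.
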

\begin{theorem}\label{3.4}
Let $N$ be a normal subgroup of $G$, and $P$ be a Sylow $p$-subgroup of   $N$, where $p \in \pi(N)$. Assume that every maximal subgroup of $P$ is an $ICSPC$-subgroup of $G$, then $N \leq Z_{p\mathfrak{U}} (G)$ or $|N|_p =p$.
\end{theorem}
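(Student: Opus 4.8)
The plan is to argue by contradiction and take a counterexample $G$ of minimal order, so that $N \not\le Z_{p\mathfrak{U}}(G)$ while $|N|_p > p$, i.e. $|P| \ge p^2$, and every maximal subgroup of $P$ is an $ICSPC$-subgroup of $G$. The whole argument is driven by one mechanism: for a maximal subgroup $P_1$ of $P$ the $ICSPC$-condition gives $P_1 \cap [P_1,G] \le (P_1)_{spcG}$, and the strong $p$-$CAP$-subgroup $(P_1)_{spcG}$ covers or avoids every $pd$-chief factor of $G$. I will feed into this mechanism the chief factor $L/1$, where $L$ is a minimal normal subgroup of $G$ contained in $N$, and extract structural information about $L$ from how $(P_1)_{spcG}$ meets it.

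First I would invoke the hereditary properties established among the preliminary lemmas: the $ICSPC$-hypothesis descends to every quotient $G/K$ with $K\trianglelefteq G$ contained in $N$, since $P_1K/K$ is again an $ICSPC$-subgroup of $G/K$ and the maximal subgroups of a Sylow $p$-subgroup of $N/K$ come from maximal subgroups of $P$. This lets me run a standard induction on quotients. If $L$ is a $p'$-minimal normal subgroup of $G$ in $N$, then $L \le Z_{p\mathfrak{U}}(G)$ automatically and minimality applied to $G/L$ lifts back to $N \le Z_{p\mathfrak{U}}(G)$, a contradiction; hence every minimal normal subgroup of $G$ in $N$ has order divisible by $p$. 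A non-abelian such $L$ satisfies $L \not\le P$, so no $(P_1)_{spcG}$ can cover $L/1$ and therefore $P_1\cap[P_1,G]\cap L=1$ for every maximal $P_1$; testing this against $[L\cap P,G]$, which is a $G$-normal subgroup of the minimal normal subgroup $L$ and hence is $1$ or $L$, yields a contradiction in each case (using $O_p(L)=1$, and $|P|\ge p^2$ when $P\le L$). Thus I may assume every minimal normal subgroup of $G$ in $N$ is an elementary abelian $p$-group.

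The core of the proof is the dichotomy according to whether $L\le\Phi(P)$. Suppose $L\not\le\Phi(P)$ and fix a maximal subgroup $P_1$ of $P$ with $L\not\le P_1$; then $(P_1)_{spcG}$ avoids $L/1$, so $(P_1)_{spcG}\cap L=1$ and $P_1\cap[P_1,G]\cap L=1$. Put $L_1=L\cap P_1$, a subgroup of index $p$ in $L$. Since $[L_1,G]$ is $G$-normal and lies in $L$, minimality forces $[L_1,G]=L$ (the alternative $[L_1,G]=1$ would make $1\neq L_1$ a proper normal subgroup of $G$ inside $L$); but then $L\le[P_1,G]$ gives $L_1\le P_1\cap[P_1,G]\cap L=1$, whence $|L|=p$. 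Suppose instead $L\le\Phi(P)$. I would first show $L\le\Phi(N)$: if some maximal subgroup $M$ of $N$ omitted $L$ then $N=LM$ and, by the modular law, $P=L(P\cap M)$, so $L\le\Phi(P)$ would force $P\le M$ and hence $N=M$, a contradiction. As $N\trianglelefteq G$ gives $\Phi(N)\le\Phi(G)$, we obtain $L\le\Phi(G)$, and then the Frattini-invariance of the $p$-supersolvable hypercenter, namely $Z_{p\mathfrak{U}}(G/L)=Z_{p\mathfrak{U}}(G)/L$, allows the inductive conclusion on $G/L$ to be lifted to $G$.

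In either branch I would then complete the induction on $G/L$: the hypotheses pass to $G/L$, minimality gives $N/L\le Z_{p\mathfrak{U}}(G/L)$ or $|N/L|_p=p$, and---since $L\le Z_{p\mathfrak{U}}(G)$ because either $|L|=p$ or $L\le\Phi(G)$---the first alternative lifts to the contradiction $N\le Z_{p\mathfrak{U}}(G)$. The escape clause $|N/L|_p=p$ funnels everything into the base case $|P|=p^2$, where each $pd$-chief factor of $G$ in $N$ has $p$-part exactly $p$ and the covering/avoidance computation above again forces it to be cyclic. I expect the main obstacle to be precisely the branch $L\le\Phi(P)$: controlling the passage from $\Phi(P)$ to $\Phi(N)\le\Phi(G)$ and the behaviour of $Z_{p\mathfrak{U}}$ under the Frattini quotient, together with checking the small configuration $|P|=p^2$ by hand, where the inductive escape clause is not by itself decisive.
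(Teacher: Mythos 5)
Your reduction scheme is genuinely different from the paper's and much of it is sound: the paper first forces $N=G$ and then descends modulo an \emph{arbitrary} minimal normal subgroup $K$ (which is why it needs the order-counting argument together with Lemma \ref{5}), whereas you classify the minimal normal subgroups $L\leq N$ first and only ever pass to $G/L$ with $L$ an elementary abelian $p$-group, so that $L\leq O_p(N)\leq P$ and the relevant maximal subgroups $P_1$ of $P$ contain $L$; there the descent really is easy (Dedekind's law gives $P_1\cap[P_1,G]L=(P_1\cap[P_1,G])L$, and Lemma \ref{1}(3) finishes). Two caveats on this part: you cannot justify the descent by "the preliminary lemmas," since Lemma \ref{2} requires $(|P_1|,|L|)=1$, which fails for $p$-groups $L$ — the Dedekind step must be written out; and your branch $L\leq\Phi(P)$ silently uses the Skiba-type theorem that $T/L\leq Z_{p\mathfrak U}(G/L)$ with $L\leq\Phi(G)$ forces $T\leq Z_{p\mathfrak U}(G)$, which is true for saturated formations but is external to this paper (the paper avoids it by proving $K\not\leq\Phi(P)$ outright in its Step 4).

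The genuine gap is the endgame, and it is fatal as the proposal stands. After your funnelling you are left with a minimal counterexample in which $|P|=p^2$, $|L|=p$ (or $L\leq\Phi(P)$), and $G/L$ is not $p$-supersolvable, and your only proposed argument is that "each $pd$-chief factor of $G$ in $N$ has $p$-part exactly $p$ and the covering/avoidance computation above again forces it to be cyclic." That inference fails: a chief factor whose $p$-part is exactly $p$ need not be cyclic (it can be non-abelian, e.g.\ of type $A_5$ with $p=5$), and your covering/avoiding mechanism was set up only for minimal normal subgroups $L/1$; for a higher chief factor $H/K$, avoidance by $(P_1)_{spcG}$ yields $(P_1)_{spcG}\cap H\leq K$ and covering yields $H/K\leq (P_1)_{spcG}K/K$, neither of which pins down the isomorphism type of $H/K$ or lets you intersect usefully with $P_1\cap[P_1,G]$. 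This residual configuration is exactly where the paper does most of its work (Steps 3--5): uniqueness of the minimal normal subgroup, $K\not\leq\Phi(P)$, the dichotomy on $O_p(G)$, the manufacture of strong $p$-$CAP$-subgroups so that Theorem \ref{3.3} can be invoked, the normal closure $(P_3)^G$, and Huppert's theorems to exclude $1<K\leq G'\cap P\cap Z(G)$. You explicitly flag this case as "the main obstacle" rather than resolving it, so the proposal is incomplete precisely at the point where the theorem is hard.
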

\section{Preliminaries}
In this section, we investigate the basic properties of $ICSPC$-subgroups, and discover the inheritance of $ICSPC$-subgroups. One can find directly that the inheritance of strong $p$-$CAP$-subgroups is much stronger than almost any other generalizations of covering-avoidance property.
\begin{lemma}\label{0}
Let $H$ be a $p$-CAP subgroup of $G$ and $N \unlhd G$. Then the following statements are true.
\begin{itemize}
\item[(1)] $N$ is a $p$-CAP subgroup of $G$.
\item[(2)] $HN/N$ is a $p$-CAP subgroup of $G/N$.
\end{itemize}
\end{lemma}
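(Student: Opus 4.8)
The plan is to handle the two parts separately, each reducing to a standard chief-factor computation. For part (1), I would first establish the stronger fact that every normal subgroup of $G$ is a full $CAP$-subgroup of $G$; the $p$-$CAP$ conclusion then follows a fortiori, since the $p$-$CAP$ property only imposes the covering-avoidance condition on the $pd$-chief factors, a subcollection of all chief factors. To see that $N$ covers or avoids an arbitrary chief factor $L/K$, I would consider the subgroup $(N \cap L)K/K$, which is normal in $G/K$ and contained in the chief factor $L/K$. By minimality of $L/K$, either $(N \cap L)K = K$ or $(N \cap L)K = L$. In the first case $N \cap L \le K$ forces $N \cap L = N \cap K$, so $N$ avoids $L/K$; in the second case $L = (N \cap L)K \le NK$ gives $NL = NK$, so $N$ covers $L/K$.

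For part (2), I would invoke the correspondence theorem: the chief factors of $\bar G := G/N$ are exactly the factors $\bar L/\bar K = (L/N)/(K/N)$ coming from chief factors $L/K$ of $G$ with $N \le K \le L$, and the natural isomorphism $\bar L/\bar K \cong L/K$ preserves order. Hence $\bar L/\bar K$ is a $pd$-chief factor of $\bar G$ if and only if $L/K$ is a $pd$-chief factor of $G$, and for such a factor the hypothesis supplies that $H$ covers or avoids $L/K$. It then remains to transport each alternative through the quotient. Writing $\bar H = HN/N$, the covering relation $\bar L\,\bar H = \bar K\,\bar H$ unwinds, using $N \le K \le L$, to $LH = KH$, so $\bar H$ covers $\bar L/\bar K$ precisely when $H$ covers $L/K$.

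For the avoidance alternative the key tool is Dedekind's modular law: since $N \le L$ and $N \le K$, one has $HN \cap L = (H \cap L)N$ and $HN \cap K = (H \cap K)N$, whence $(L/N) \cap (HN/N) = ((H\cap L)N)/N$ and likewise for $K$; thus $H \cap L = H \cap K$ yields $\bar H \cap \bar L = \bar H \cap \bar K$, so avoidance of $L/K$ by $H$ passes to avoidance of $\bar L/\bar K$ by $\bar H$. Combining the two alternatives finishes (2). The computations are elementary throughout, so I do not anticipate a genuine obstacle; the one point requiring care is this avoidance case, where the identification of $\bar H \cap \bar L$ with $((H\cap L)N)/N$ depends on applying Dedekind's law in the correct direction, using both the normality of $N$ and the inclusion $N \le K \le L$ to guarantee $HN \cap L = (H\cap L)N$. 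Pinning down these inclusions is essentially the whole content of the argument.
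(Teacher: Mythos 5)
Your proposal is correct, and for part (2) it is essentially the paper's own argument: both proofs use the correspondence theorem to identify a $pd$-chief factor of $G/N$ with a $pd$-chief factor $L/K$ of $G$ sitting above $N$, transport the covering alternative by the computation $\bar L\bar H = LH/N$, and transport the avoidance alternative via Dedekind's modular law, exactly as you describe. The only real divergence is in part (1): the paper disposes of it with a citation to an external lemma (Gao--Qiao--Shi--Miao, Lemma 2.1(1)), whereas you supply the underlying argument yourself, proving the stronger statement that a normal subgroup is a full $CAP$-subgroup by observing that $(N\cap L)K/K$ is a normal subgroup of $G/K$ inside the minimal normal subgroup $L/K$, and then reading off avoidance from $(N\cap L)K=K$ and covering from $(N\cap L)K=L$. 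That minimality argument is the standard one and is sound; its inclusion makes your proof self-contained where the paper's is not, at the cost of proving slightly more than the $p$-$CAP$ statement actually requires.
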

\begin{proof}
(1) It follows directly from {{\cite[Lemma 2.1 (1)]{GQ}}}.

(2) Let $H_1 /N$, $H_2/N$ be two normal subgroups of $G/N$ such that $(H_2 /N)/(H_1 /N)$ is a $pd$-chief factor of $G/N$. Clearly $N \leq H_1,H_2$ are two normal subgroups of $G$ with $H_2 \geq H_1$. We conclude from the Correspondence Theorem that $H_2/H_1$ is a chief factor of $G$ with $p\,|\,|(H_2 /N)/(H_1 /N)|=|H_2/H_1|$. Hence $H_2 /H_1$ is a $pd$-chief factor of $G$. Therefore $H_1 H = H_2 H$ or $H_1 \cap H = H_2 \cap H$. The former case suggests that 
$$HN/N \cdot H_1/N = (HN \cdot H_1)/N = H_1 H/N = H_2 H/N = (HN \cdot H_2)/N =HN/N \cdot H_2/N.$$
The latter case suggests from Dedekind Modular Law that 
\begin{align*}
HN/N \cap H_1 /N = (HN \cap H_1)/N = N(H \cap H_1)/N =  N(H \cap H_2)/N= (HN \cap H_2)/N = HN/N \cap H_2 /N.
\end{align*}
Thus $HN/N$ either covers or avoids every $pd$-chief factor of $G/N$. Therefore $HN/N$ is a $p$-$CAP$-subgroup of $G/N$.
\end{proof}
\begin{lemma}\label{1}
Let $H$ be a strong $p$-$CAP$-subgroup of $G$ and $N \unlhd G$. Then the following statements are true.
\begin{itemize}
\item[(1)] $N$ is a strong $p$-$CAP$-subgroup of $G$.
\item[(2)] $H$ is a strong $p$-$CAP$-subgroup for any $K \geq H$.
\item[(3)] $HN/N$ is a strong $p$-$CAP$-subgroup of $G/N$.
\end{itemize}
\end{lemma}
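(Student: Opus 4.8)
The plan is to derive all three statements from the definition of a strong $p$-$CAP$-subgroup together with Lemma~\ref{0}, reducing each assertion to a claim about intermediate subgroups. For (1), I would first note that since $N \unlhd G$ we have $N \unlhd K$ for every $K$ with $N \leq K \leq G$. Thus for each such $K$, Lemma~\ref{0}(1) applied inside $K$ shows that $N$ is a $p$-$CAP$-subgroup of $K$. As this holds for every intermediate $K$, the definition gives at once that $N$ is a strong $p$-$CAP$-subgroup of $G$.

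For (2), the argument is essentially a containment of index sets. Fix any $K$ with $H \leq K \leq G$; to show $H$ is a strong $p$-$CAP$-subgroup of $K$, I must verify that $H$ is a $p$-$CAP$-subgroup of every $M$ with $H \leq M \leq K$. But any such $M$ also satisfies $H \leq M \leq G$, so the hypothesis that $H$ is a strong $p$-$CAP$-subgroup of $G$ immediately yields that $H$ is a $p$-$CAP$-subgroup of $M$. Hence $H$ is a strong $p$-$CAP$-subgroup of $K$, as required.

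For (3), I would invoke the Correspondence Theorem: every subgroup of $G/N$ containing $HN/N$ has the form $K/N$ with $HN \leq K \leq G$, and $N \unlhd K$ because $N \unlhd G$. Since $H \leq HN \leq K \leq G$, the strong $p$-$CAP$-property of $H$ in $G$ gives that $H$ is a $p$-$CAP$-subgroup of $K$. Now apply Lemma~\ref{0}(2) inside $K$ to the normal subgroup $N$: this shows $HN/N$ is a $p$-$CAP$-subgroup of $K/N$. As $K/N$ ranges over all subgroups of $G/N$ containing $HN/N$, this proves $HN/N$ is a strong $p$-$CAP$-subgroup of $G/N$.

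The main obstacle, though a mild one, is part (3): one must be careful that passing to the quotient does not enlarge the collection of relevant intermediate subgroups, which is precisely what the Correspondence Theorem guarantees, and that $N$ remains normal in each $K$ so that Lemma~\ref{0}(2) applies. Parts (1) and (2) are then formal consequences of the definitions, once one observes that both normality and the strong-covering condition are inherited by intermediate subgroups.
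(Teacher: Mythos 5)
Your proposal is correct and follows essentially the same route as the paper: part (1) via Lemma~\ref{0}(1) applied in each intermediate subgroup $K$ with $N \unlhd K$, part (2) by observing that intermediate subgroups of $K$ are intermediate subgroups of $G$, and part (3) by combining the strong property of $H$ with Lemma~\ref{0}(2) inside each $K$ containing $HN$. Your explicit appeal to the Correspondence Theorem in (3) merely makes precise a step the paper leaves implicit; there is no substantive difference.
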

\begin{proof}
(1) It follows from $N \unlhd K$ for any $N \leq K \leq G$ and Lemma \ref{0} that $N$ is a $p$-$CAP$-subgroup for any $N \leq K \leq G$. Hence $N$ is a strong $p$-$CAP$-subgroup of $G$.

(2) Since $H$ is a strong $p$-$CAP$-subgroup of $G$, $H$ is a $p$-$CAP$-subgroup for any $H \leq R \leq G$. Therefore $H$ is a $p$-$CAP$-subgroup for any $H \leq R \leq K$. Hence we conclude that $H$ is a strong $p$-$CAP$-subgroup of $K$.

(3) Since $H$ is a $p$-$CAP$-subgroup for any $HN \leq K \leq G$, it follows from Lemma \ref{0} (2) and $N \unlhd K$ that $HN/N$ is a $p$-$CAP$-subgroup for any $HN/N \leq K/N \leq G/N$. Thus it indicates that $HN/N$ is a strong $p$-$CAP$-subgroup of $G/N$.
\end{proof}
\begin{lemma}\label{2}
Let $G$ be a finite group and $N$ a normal subgroup of $G$. Suppose that $H$ is an $ICSPC$-subgroup of $G$, and $(|H|,|N|) = 1$. Then $H N/N$ is an $ICSPC$-subgroup of $G/N$.
\end{lemma}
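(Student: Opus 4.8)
The plan is to exhibit an explicit strong $p$-$CAP$-subgroup of $G/N$ lying inside $HN/N$ that contains $HN/N \cap [HN/N,G/N]$. Since $H$ is an $ICSPC$-subgroup of $G$, there is a strong $p$-$CAP$-subgroup $H_{spcG}$ of $G$ with $H \cap [H,G] \leq H_{spcG} \leq H$. By Lemma \ref{1}(3), $H_{spcG}N/N$ is a strong $p$-$CAP$-subgroup of $G/N$, and it is contained in $HN/N$ because $H_{spcG} \leq H$. Thus it suffices to prove that $HN/N \cap [HN/N,G/N] \leq H_{spcG}N/N$; setting $(HN/N)_{spc(G/N)} = H_{spcG}N/N$ will then show that $HN/N$ is an $ICSPC$-subgroup of $G/N$.

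First I would compute the relevant commutator in the quotient. Using the identity $[HN,G]=[H,G][N,G]$ together with the normality of $[H,G]$ in $G$ and the fact that $[N,G] \leq N$, one obtains $[HN/N,G/N]=[HN,G]N/N=[H,G]N/N$. Hence $HN/N \cap [HN/N,G/N]=(HN\cap [H,G]N)/N$, and by the Dedekind modular law (applied with $N \leq HN$) this gives $HN\cap [H,G]N = N([H,G]\cap HN)$. So the whole problem reduces to establishing the inclusion $[H,G]\cap HN \leq (H\cap [H,G])N$, since then $N([H,G]\cap HN) \leq (H\cap[H,G])N \leq H_{spcG}N$, which is exactly what is needed.

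The hard part will be this last inclusion, and it is precisely where the coprimality hypothesis $(|H|,|N|)=1$ enters. Given $y \in [H,G]\cap HN$, write $y=hn$ with $h\in H$ and $n\in N$; then $h=yn^{-1}\in H\cap [H,G]N$, so it is enough to prove $H\cap [H,G]N = H\cap [H,G]$. For this I would pass to the quotient $G/[H,G]$: the image of such an $h$ lies in the image of $N$, yet the order of $h$ divides $|H|$ and is therefore coprime to $|N|$. An element of a group of order dividing $|N|$ whose own order is coprime to $|N|$ must be trivial, so the image of $h$ is trivial, that is $h\in [H,G]$, whence $h\in H\cap [H,G]$. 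This yields $y=hn\in (H\cap[H,G])N$, completing the reduction and hence the proof.
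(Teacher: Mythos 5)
Your proof is correct, and its overall frame is the same as the paper's: both take $H_{spcG}N/N$ as the witnessing subgroup, use Lemma \ref{1}(3) to see that it is a strong $p$-$CAP$-subgroup of $G/N$ contained in $HN/N$, and reduce the problem to the identity $HN \cap [HN,G]N = (H\cap[H,G])N$. The difference lies in how this key step is obtained. The paper argues by counting: from $(|H|,|N|)=1$ it computes $|HN\cap[H,G]| = |H\cap[H,G]|\,|N\cap H[H,G]|$ and deduces the factorization $HN\cap[H,G] = (H\cap[H,G])(N\cap H[H,G])$, after which multiplying by $N$ gives the identity. You instead combine the commutator identity $[HN,G]=[H,G][N,G]$ with the Dedekind modular law and then argue element-wise: any $h\in H\cap[H,G]N$ maps, in $G/[H,G]$, into a section of order dividing $|N|$, while $o(h)$ divides $|H|$, so coprimality forces $h\in[H,G]$ and hence $H\cap[H,G]N=H\cap[H,G]$. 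Your version is more elementary and in one respect more complete: converting the paper's order equation into the asserted set equality requires one inclusion (for instance $N\cap H[H,G]\leq[H,G]$), and that inclusion is precisely the coprime element-order observation you spell out, so your argument makes explicit a point the paper's counting leaves implicit; conversely, the paper's computation is shorter and yields the full factorization of $HN\cap[H,G]$ in a single stroke.
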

\begin{proof}
Since $(|H|,|N|)=1$, it follows that:
$$|HN \cap [H,G]| = \frac{|HN||[H,G]|}{|HN[H,G]|}=\frac{|N \cap H[H,G]||H||[H,G]|}{|H[H,G]|}=|H \cap [H,G]||N \cap H[H,G]|. $$
Hence we have that $HN \cap [H,G] = (H \cap [H, G])(N \cap H[H, G])$, and:
$$H N \cap [H N, G]N = H N \cap [H, G]N = (H N \cap [H, G])N = (H \cap [H, G])N. $$
Thus we conclude that:
$$H N/N \cap [H N/N, G/N] = (H N \cap [H N, G]N)/N = (H \cap [H, G])N/N \leq  H_{spcG} N/N. $$
It yields from lemma \ref{1} (3) that $H_{spcG} N/N$ is a strong $p$-$CAP$-subgroup of $G/N$ which is contained in $HN/N$. Therefore we get that $H N/N \cap [H N/N, G/N] \leq (HN/N)_{spc(G/N)}$. In other words, $H N/N$ is an $ICSPC$-subgroup of $G/N$.
\end{proof}
\begin{lemma}[{{\cite[Theorem 2.1.6]{BE}}}]\label{3}
Let $G$ be a $p$-supersolvable group. Then the derived subgroup $G'$
of $G$ is $p$-nilpotent. In particular, if $O_{p'} (G)=1$, then $G$ has a unique Sylow $p$-subgroup.
\end{lemma}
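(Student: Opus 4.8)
The plan is to exploit the fact that in a $p$-supersolvable group every $p$-chief factor is cyclic of order $p$, so $G$ acts on it through $\operatorname{Aut}(C_p)\cong C_{p-1}$, an abelian group of order prime to $p$. First I would fix a chief series $1=G_0<G_1<\cdots<G_n=G$ of $G$; by $p$-supersolvability each factor $G_i/G_{i-1}$ is either a $p'$-group or cyclic of order $p$. Whenever $G_i/G_{i-1}\cong C_p$, the conjugation map $G\to\operatorname{Aut}(G_i/G_{i-1})$ has abelian image, so $G'$ lies in its kernel; that is, $[G',G_i]\le G_{i-1}$, and hence $G'$ centralizes every $p$-chief factor of $G$.

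Next I would transfer this information to $G'$ itself. Intersecting the chief series with $G'$ gives a series $1=G'\cap G_0\le\cdots\le G'\cap G_n=G'$ of subgroups normal in $G$ (hence in $G'$), whose factors embed $G$-equivariantly into the corresponding $G_i/G_{i-1}$. After refining to a chief series of $G'$, each $p$-chief factor of $G'$ sits inside some $G_i/G_{i-1}\cong C_p$ that $G'$ centralizes, so every $p$-chief factor of $G'$ is central in $G'$. Moreover $G'$ is $p$-soluble, being a subgroup of the $p$-soluble group $G$.

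It then remains to prove the general statement that a $p$-soluble group $K$ all of whose $p$-chief factors are central is $p$-nilpotent, and to apply it to $K=G'$. Quotienting by $O_{p'}(K)$ preserves both hypotheses, and $p$-nilpotency of $K/O_{p'}(K)$ lifts to $K$, so I may assume $O_{p'}(K)=1$. Here I would invoke the Hall--Higman lemma $C_K(O_p(K))\le O_p(K)$: refining a chief series through $O_p(K)$ produces a $K$-invariant chain of $O_p(K)$ with central factors, so by Kaloujnine's stability theorem the image of $K$ in $\operatorname{Aut}(O_p(K))$ is a $p$-group; combined with $C_K(O_p(K))\le O_p(K)$ this forces $K/O_p(K)$, and hence $K$, to be a $p$-group, i.e. $K$ is $p$-nilpotent. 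This last passage --- deducing $p$-nilpotency from the centrality of the $p$-chief factors --- is the step I expect to be the main obstacle, since it is exactly where the structure theory of $p$-soluble groups (the centralizer bound and the stability group theorem) is needed.

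Finally, for the ``in particular'' clause I would argue as follows. If $O_{p'}(G)=1$, then $O_{p'}(G')$, being characteristic in $G'\unlhd G$, is a normal $p'$-subgroup of $G$ and so is trivial; since $G'$ is $p$-nilpotent this makes $G'$ a $p$-group, normal in $G$, whence $G'\le O_p(G)\le P$ for every Sylow $p$-subgroup $P$. As $G/G'$ is abelian it has a unique (normal) Sylow $p$-subgroup $P/G'$, and its preimage $P$ is then a normal Sylow $p$-subgroup of $G$, that is, the unique Sylow $p$-subgroup of $G$.
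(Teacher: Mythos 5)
Your argument is correct, but there is nothing internal to compare it against: the paper does not prove this lemma at all. It is imported verbatim as a known result from the textbook of Ballester-Bolinches, Esteban-Romero and Asaad (\cite[Theorem 2.1.6]{BE}) and used as a black box in the proofs of the main theorems. Your proposal is therefore a self-contained derivation of the cited fact, and it follows the classical line: $p$-supersolvability makes $G$ act on each $p$-chief factor through the abelian group ${\rm Aut}(C_p)$, so $G'$ centralizes every $p$-chief factor; intersecting and refining a chief series transfers this centrality to the $p$-chief factors of $G'$ itself; and then, modulo $O_{p'}(G')$, the Hall--Higman bound $C_K(O_p(K))\leq O_p(K)$ plus the stability argument forces $G'/O_{p'}(G')$ to be a $p$-group, which gives $p$-nilpotency and, when $O_{p'}(G)=1$, the normal Sylow $p$-subgroup. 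Two points of precision are worth recording. First, Kaloujnine's (P.~Hall's) stability theorem only yields that the stability group of your chain is \emph{nilpotent}; the $p$-group conclusion needs the coprime-action fact that an automorphism of order prime to $p$ acting trivially on every factor of an invariant normal series of the $p$-group $O_p(K)$ is trivial (with that fact in hand, Cauchy's theorem already suffices and the nilpotency is not even needed). Second, you establish centrality of the $p$-chief factors of $G'$ for one particular refined series but later apply it to a different series, one passing through $O_p(K)$; this is legitimate because centrality of a chief factor is invariant under $G'$-isomorphism, by the Jordan--H\"older theorem for groups with operators, but the appeal should be made explicit. Neither point is a gap, only an imprecision in attribution.
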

\begin{lemma}[{{\cite[Lemma 2.8]{SL}}}]\label{5}
Let $U$, $V$ and $W$ be subgroups of $G$. Then the following statements are equivalent:
\begin{itemize}
\item[(1)] $U \cap V W = (U \cap V)(U \cap W)$;
\item[(2)] $U V \cap U W = U(V \cap W)$.
\end{itemize}
\end{lemma}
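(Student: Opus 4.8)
The plan is to prove the equivalence by direct element-chasing, exploiting the fact that one inclusion in each identity is automatic. First I would record the trivial halves: for arbitrary subgroups $U,V,W$ one always has $(U\cap V)(U\cap W)\subseteq U\cap VW$ (an element $ab$ with $a\in U\cap V$ and $b\in U\cap W$ lies in $U$ because $U$ is a subgroup, and in $VW$ because $a\in V$, $b\in W$), and likewise $U(V\cap W)\subseteq UV\cap UW$ (an element $ac$ with $a\in U$, $c\in V\cap W$ lies in both $UV$ and $UW$). Hence statement (1) is equivalent to the reverse inclusion $U\cap VW\subseteq (U\cap V)(U\cap W)$, and statement (2) is equivalent to $UV\cap UW\subseteq U(V\cap W)$. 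Only these two reverse inclusions carry any content, and the whole job is to show they are equivalent.

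For the implication $(2)\Rightarrow(1)$, I would take $x\in U\cap VW$ and write $x=vw$ with $v\in V$ and $w\in W$. The key observation is that $v=xw^{-1}$ lies in $UV\cap UW$: it is in $V\subseteq UV$, while $x\in U$ and $w^{-1}\in W$ force $v\in UW$. Applying hypothesis (2) yields $v=ut$ with $u\in U$ and $t\in V\cap W$. I would then rearrange $x=utw=u(tw)$ and verify that $tw\in U\cap W$ (it lies in $W$, and equals $u^{-1}x\in U$) while $u=vt^{-1}\in U\cap V$ (since $v,t\in V$ and $u\in U$); this exhibits $x\in(U\cap V)(U\cap W)$, as required.

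The converse $(1)\Rightarrow(2)$ is proved by the dual argument. Given $x\in UV\cap UW$, I would write $x=u_1v=u_2w$ with $u_1,u_2\in U$, $v\in V$, $w\in W$; the relevant element is now $z=u_1^{-1}u_2=vw^{-1}$, which lies in $U\cap VW$. Hypothesis (1) gives $z=ab$ with $a\in U\cap V$ and $b\in U\cap W$, and from $vw^{-1}=ab$ one extracts $c:=a^{-1}v=bw\in V\cap W$ (it lies in $V$ and in $W$). Writing $v=ac$ and hence $x=u_1v=(u_1a)c$ with $u_1a\in U$ places $x$ in $U(V\cap W)$, completing the argument.

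I do not expect a serious obstacle here, since everything reduces to routine manipulation inside $G$; the only genuine content is spotting the two ``dual'' elements $v=xw^{-1}$ and $vw^{-1}$ to which the respective hypotheses must be applied, and the symmetry between these two choices is exactly what makes the two implications mirror one another. I would present the proof only for finite $G$, as the paper requires, although it uses no finiteness; a counting proof based on the identity $|XY|=|X||Y|/|X\cap Y|$ is also available, but it seems less transparent than the element-chasing above and I would not pursue it.
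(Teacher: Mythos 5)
Your proposal is correct in every step. One point of comparison worth noting: the paper does not prove this lemma at all --- it is imported verbatim from \cite[Lemma 2.8]{SL} and used as a black box --- so there is no internal argument to measure yours against; your element-chasing proof is a valid, self-contained substitute for that citation. Concretely, your two trivial inclusions $(U\cap V)(U\cap W)\subseteq U\cap VW$ and $U(V\cap W)\subseteq UV\cap UW$ hold exactly as you say, and in each implication the pivot element is chosen correctly: for $(2)\Rightarrow(1)$ the element $v=xw^{-1}$ does lie in $UV\cap UW$ (using $1\in U$ to get $v\in UV$, and $x\in U$, $w^{-1}\in W$ to get $v\in UW$), and the resulting factorization $x=u\,(tw)$ with $u=vt^{-1}\in U\cap V$ and $tw=u^{-1}x\in U\cap W$ checks out; for $(1)\Rightarrow(2)$ the element $u_1^{-1}u_2=vw^{-1}$ lies in $U\cap VW$, and $c=a^{-1}v=bw\in V\cap W$ gives $x=(u_1a)c\in U(V\cap W)$. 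Your closing remark is also accurate: nothing in the argument uses finiteness, so the lemma holds in arbitrary groups, and the subgroup hypotheses on $U$, $V$, $W$ are used only for closure under products and inverses --- the product sets $UV$, $VW$, $UW$ need not be subgroups, and your proof never assumes they are.
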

\section{Proofs of the main theorems}
As we have said in section \ref{00001}, some characterizations of solvably saturated formation and  $p$-nilpotency are obtained directly by {{\cite{ZSM2}}} since an $ICSPC$-subgroup of $G$ is actually an $ICPC$-subgroup of $G$. Now we list several directly corollaries from {{\cite{ZSM2}}}.
\begin{theorem}\label{3.1}
Let $G$ be a finite group and $P$ be a normal $p$-group of $G$. Suppose that any cyclic subgroup of $P$ of order $p$ and $4$ (if $P$ is a non-abelian $2$-group) either is an $ICSPC$-subgroup of $G$ or has a supersolvable supplement in $G$. Then $P \leq Z_{\mathfrak{U}} (G)$. 
\end{theorem}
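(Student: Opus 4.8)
The plan is to recognize this statement as a direct corollary of the corresponding result for $ICPC$-subgroups established in the companion paper {{\cite{ZSM2}}}, exactly as the surrounding prose (``Now we list several directly corollaries from {{\cite{ZSM2}}}'') anticipates. The whole proof hinges on a single elementary reduction: every $ICSPC$-subgroup of $G$ is in particular an $ICPC$-subgroup of $G$. Once this is in hand, the hypotheses transfer verbatim and the conclusion follows by citation.

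First I would verify the reduction. Suppose $H$ is an $ICSPC$-subgroup of $G$, so that $H \cap [H,G] \leq H_{spcG}$, where $H_{spcG}$ is a strong $p$-$CAP$-subgroup of $G$ contained in $H$. By the definition of strong $p$-$CAP$-subgroup, $H_{spcG}$ is a $p$-$CAP$-subgroup of $K$ for every $H_{spcG} \leq K \leq G$; taking $K = G$ shows in particular that $H_{spcG}$ is itself a $p$-$CAP$-subgroup of $G$. Thus $H_{spcG}$ witnesses the $ICPC$-property for $H$, namely $H \cap [H,G] \leq H_{pcG}$ with $H_{pcG} := H_{spcG}$ a $p$-$CAP$-subgroup of $G$ contained in $H$. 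Hence every $ICSPC$-subgroup of $G$ is an $ICPC$-subgroup of $G$.

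With the reduction established, I would apply it to the hypothesis at hand. Each cyclic subgroup of $P$ of order $p$ or $4$ (when $P$ is a non-abelian $2$-group) is, by assumption, either an $ICSPC$-subgroup of $G$ or possesses a supersolvable supplement in $G$. In the first case the subgroup is then an $ICPC$-subgroup of $G$ by the reduction, while the second alternative is phrased identically in both settings. Consequently $G$, equipped with the normal $p$-subgroup $P$, satisfies precisely the hypotheses of the $ICPC$-version of this theorem proved in {{\cite{ZSM2}}}. Invoking that result immediately yields $P \leq Z_{\mathfrak{U}}(G)$, as desired.

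There is no genuine analytic obstacle here, since the substance of the argument lives in {{\cite{ZSM2}}}; the only points requiring care are the (immediate) observation that strong $p$-$CAP$ implies $p$-$CAP$, and the correct identification of the matching statement in {{\cite{ZSM2}}} so that the hypotheses line up exactly. If one preferred a self-contained treatment, the hard part would instead be reproving the $ICPC$-theorem by induction on $|G|$, passing to a minimal normal subgroup contained in $P$ and using Lemma~\ref{2} to maintain the covering-avoidance hypotheses in the quotient; but given the stated goal of deriving the result as a corollary, that machinery can be avoided.
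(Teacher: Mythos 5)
Your proposal is correct and matches the paper's approach exactly: the paper's proof of Theorem \ref{3.1} is simply the citation ``It follows directly from \cite[Theorem 1.6]{ZSM2}'', resting on the same observation (stated in the introduction) that every $ICSPC$-subgroup is an $ICPC$-subgroup because a strong $p$-$CAP$-subgroup of $G$ is in particular a $p$-$CAP$-subgroup of $G$. Your write-up merely makes this reduction explicit, which the paper leaves implicit.
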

\begin{proof}
It follows directly from {{\cite[Theorem 1.6]{ZSM2}}}.
\end{proof}
\begin{theorem}\label{3.01}
Let $\mathfrak{F}$ be a solvably saturated formation containing $\mathfrak{U}$ and $N$ a solvable normal subgroup of $G$ such that $G/N \in \mathfrak{F}$. Assume that every cyclic subgroup of every non-cyclic Sylow subgroup $P$ of $F(N)$ of prime order or $4$ (if $P$ is a non-abelian $2$-group) either is an $ICSPC$-subgroup of $G$ or has a supersolvable supplement in $G$. Then $G \in \mathfrak{F}$.
\end{theorem}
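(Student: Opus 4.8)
The plan is to reduce the statement to Theorem~\ref{3.1} by applying that result to each Sylow subgroup of $F(N)$, and then to transfer the resulting hypercentral embedding of $F(N)$ up to $N$ and finally to $G$ by standard hypercentre arguments. Throughout, $Z_{\mathfrak{U}}(G)$ and $Z_{\mathfrak{F}}(G)$ denote the $\mathfrak{U}$-hypercentre and the $\mathfrak{F}$-hypercentre of $G$; since $\mathfrak{U}\subseteq\mathfrak{F}$, every $\mathfrak{U}$-central chief factor is $\mathfrak{F}$-central, and hence $Z_{\mathfrak{U}}(G)\leq Z_{\mathfrak{F}}(G)$.

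First I would decompose $F(N)$. Because $N\unlhd G$, the nilpotent subgroup $F(N)$ is characteristic in $N$ and hence normal in $G$, and $F(N)=\prod_{p}P_{p}$, where each Sylow $p$-subgroup $P_{p}=O_{p}(F(N))$ is characteristic in $F(N)$ and therefore a normal $p$-subgroup of $G$. I would then show $P_{p}\leq Z_{\mathfrak{U}}(G)$ for every $p$. If $P_{p}$ is cyclic, all of its subgroups are characteristic in $P_{p}$, so a $G$-chief series may be refined through $P_{p}$ with every chief factor of order $p$; such factors are $\mathfrak{U}$-central, whence $P_{p}\leq Z_{\mathfrak{U}}(G)$ with no further hypothesis. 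If $P_{p}$ is non-cyclic, then the hypothesis of Theorem~\ref{3.01} restricted to $P_{p}$ is precisely the hypothesis of Theorem~\ref{3.1} for the normal $p$-group $P_{p}$, so Theorem~\ref{3.1} yields $P_{p}\leq Z_{\mathfrak{U}}(G)$. As $Z_{\mathfrak{U}}(G)$ is a normal subgroup closed under products, combining over all primes gives $F(N)\leq Z_{\mathfrak{U}}(G)\leq Z_{\mathfrak{F}}(G)$.

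Next I would lift this from $F(N)$ to $N$ and then to $G$. Since $N$ is solvable, its generalized Fitting subgroup coincides with its Fitting subgroup, $F^{*}(N)=F(N)$. Invoking the standard property of the $\mathfrak{F}$-hypercentre---namely that $F^{*}(E)\leq Z_{\mathfrak{F}}(G)$ implies $E\leq Z_{\mathfrak{F}}(G)$ for a normal subgroup $E$---with $E=N$ gives $N\leq Z_{\mathfrak{F}}(G)$. Finally, since $G/N\in\mathfrak{F}$ and $N\leq Z_{\mathfrak{F}}(G)$, the extension property of the hypercentre yields $G\in\mathfrak{F}$.

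The main obstacle, and the point where the solvably saturated (rather than merely saturated) hypothesis must be handled with care, is the last pair of implications: the generalized-Fitting hypercentre lemma and the concluding extension $N\leq Z_{\mathfrak{F}}(G),\ G/N\in\mathfrak{F}\Rightarrow G\in\mathfrak{F}$. These are classical for saturated formations, and I would verify that they persist for solvably saturated (Baer-local) formations; this is unproblematic here because $N$ is solvable, so every chief factor of $G$ below $N$ is abelian, and on abelian chief factors the Baer-local centrality condition agrees with the local one. Alternatively, since every $ICSPC$-subgroup of $G$ is an $ICPC$-subgroup of $G$, the whole statement follows at once from the corresponding theorem of \cite{ZSM2}.
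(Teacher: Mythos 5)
Your proposal is correct, but its main body takes a genuinely different route from the paper, whose entire proof of Theorem~\ref{3.01} is the observation you relegate to your final sentence: a strong $p$-$CAP$-subgroup of $G$ is in particular a $p$-$CAP$-subgroup of $G$, so every $ICSPC$-subgroup is an $ICPC$-subgroup, and the statement is therefore quoted directly as the $ICSPC$-specialization of \cite[Corollary 1.7]{ZSM2}, with no further argument. Your direct derivation instead reconstructs that corollary from the present paper's own toolkit: decompose $F(N)$ into its Sylow subgroups $O_p(F(N))$, which are normal in $G$; apply Theorem~\ref{3.1} to the non-cyclic ones, and a chief-series argument (all factors of prime order, hence $\mathfrak{U}$-central) to the cyclic ones, about which the hypothesis deliberately says nothing; conclude $F(N)\leq Z_{\mathfrak{U}}(G)\leq Z_{\mathfrak{F}}(G)$; then use $F^{*}(N)=F(N)$ for solvable $N$, the Guo--Skiba theorem of \cite{GS2} that $F^{*}(E)\leq Z_{\mathfrak{F}}(G)$ forces $E\leq Z_{\mathfrak{F}}(G)$ for $E$ normal and $\mathfrak{F}$ solvably saturated containing all nilpotent groups (which holds since $\mathfrak{U}\subseteq\mathfrak{F}$), and finally the extension property of the $\mathfrak{F}$-hypercentre. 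Each approach buys something: the paper's citation is shortest but rests entirely on the unpublished preprint \cite{ZSM2}, whereas your argument is self-contained relative to Theorem~\ref{3.1} and the published results of \cite{GS2}, and is presumably the very argument hidden inside \cite{ZSM2}. One point to tighten: the two hypercentre facts for solvably saturated (Baer-local) formations should be cited precisely rather than justified by your informal remark that abelian chief factors make the Baer-local and local centrality conditions agree --- both the $F^{*}$-statement and the implication ``$N\leq Z_{\mathfrak{F}}(G)$ and $G/N\in\mathfrak{F}$ imply $G\in\mathfrak{F}$'' are available in \cite{GS2} and the literature it builds on, and leaning on them explicitly removes the only soft spot in your sketch.
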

\begin{proof}
It follows directly from {{\cite[Corollary 1.7]{ZSM2}}}.
\end{proof}
\begin{theorem}\label{3.2}
Let $G$ be a finite group, $E$ be a normal subgroup of $G$, and $P$ be a Sylow $p$-subgroup of $E$, where $p = \min \pi(E)$. Assume that every cyclic subgroup of $P$ of order $p$ and $4$ (if $P$ is a non-abelian $2$-group) is an $ICSPC$-subgroup of $G$, then $E$ is $p$-nilpotent.
\end{theorem}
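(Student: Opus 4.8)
The plan is to deduce $p$-nilpotency of $E$ from the single structural fact that $E \leq Z_{p\mathfrak{U}}(G)$, and to read the latter off from the hypotheses. Taking $N := E$ in Theorem \ref{3.3}, whose hypotheses coincide verbatim with those of the present statement, gives $E \leq Z_{p\mathfrak{U}}(G)$; in particular every chief factor of $G$ lying below $E$ of order divisible by $p$ is cyclic of order $p$, so $E$ is $p$-supersolvable. I would then exploit $p = \min\pi(E)$: for each such cyclic $p$-chief factor $H/K$ one has $G/C_G(H/K) \hookrightarrow \mathrm{Aut}(\mathbb{Z}_p)$, a group of order $p-1$, so $E/C_E(H/K)$ has order dividing both $|E|$ and $p-1$. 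As $p$ is the least prime in $\pi(E)$ we have $\gcd(|E|,p-1)=1$, whence $E = C_E(H/K)$; that is, $E$ centralizes all of its $p$-chief factors. After reducing to $O_{p'}(E)=1$ (so that $C_E(O_p(E)) \leq O_p(E)$, since $E$ is $p$-solvable), Lemma \ref{3} gives $P \unlhd E$, and a standard stability-group argument upgrades this to $E = P$; thus $E$ is a $p$-group, and restoring $O_{p'}(E)$ yields that $E$ is $p$-nilpotent in general.

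Should one wish to avoid quoting Theorem \ref{3.3}, the substitute is to prove $E \leq Z_{p\mathfrak{U}}(G)$ directly by a minimal counterexample. First I would show $O_{p'}(E)=1$: if $R := O_{p'}(E) \neq 1$ then $R \unlhd G$, the images of the prescribed cyclic subgroups remain $ICSPC$ in $G/R$ by Lemma \ref{2} (their orders are coprime to $|R|$), the hypotheses descend to $(G/R, E/R)$, and $p$-nilpotency of $E/R$ lifts over the $p'$-group $R$ to $E$. Next, for a minimal normal subgroup $N$ of $G$ with $N \leq E$, I would analyse the $pd$-chief factor $N/1$. For any $x$ of order $p$ in $N$ one has $[\langle x\rangle,G] \leq N$, so $[\langle x\rangle,G] \in \{1,N\}$ by minimality; the $ICSPC$-hypothesis $\langle x\rangle \cap [\langle x\rangle, G] \leq \langle x\rangle_{spcG}$ then either places $x$ in $Z(G)$, whence $N = \langle x\rangle$ has order $p$, or makes $\langle x\rangle$ a strong $p$-$CAP$-subgroup of $G$. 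In the latter case $\langle x\rangle$ covers or avoids $N/1$; avoidance is impossible as $x \in N$, so $\langle x\rangle$ covers $N/1$, again giving $N = \langle x\rangle$ of order $p$. The same dichotomy, applied to an involution, rules out a non-abelian $N$. Hence every minimal normal subgroup of $G$ below $E$ is cyclic of order $p$.

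The main obstacle is the inductive step $G \to G/N$ in this direct route. Passing to $E/N \unlhd G/N$ with $P/N \in \mathrm{Syl}_p(E/N)$, one needs the prescribed cyclic subgroups of $P/N$ to remain $ICSPC$ in $G/N$; but here $N$ is a $p$-group, so Lemma \ref{2}, which transfers the $ICSPC$-property only across coprime quotients, does not apply, and a cyclic subgroup of order $p$ in $P/N$ need not lift to one of order $p$ in $P$. Overcoming this is exactly where the unusually strong inheritance of the strong $p$-$CAP$-property (Lemma \ref{1}, and especially part (3)) must be used to re-establish the hypotheses modulo $N$; this is the crux, and it is precisely the point that the proof of Theorem \ref{3.3} (and the corresponding $ICPC$-result) is designed to handle. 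With the induction in place, $E/N \leq Z_{p\mathfrak{U}}(G/N)$ together with $|N|=p$ gives $E \leq Z_{p\mathfrak{U}}(G)$, and the first paragraph then completes the proof.
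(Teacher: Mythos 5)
Your first paragraph already constitutes a complete and correct proof, but by a genuinely different route from the paper's. The paper disposes of this theorem in one line: since a strong $p$-$CAP$-subgroup is in particular a $p$-$CAP$-subgroup, every $ICSPC$-subgroup is an $ICPC$-subgroup, and the statement is then quoted directly as a special case of the $ICPC$ analogue, Theorem 1.8 of \cite{ZSM2}; that citation is the entire proof. You instead derive it internally: Theorem \ref{3.3} with $N=E$ (your phrase ``coincide verbatim'' is slightly off, since Theorem \ref{3.3} only requires $p\in\pi(N)$, which is weaker than $p=\min\pi(E)$, but the implication goes the right way) gives $E\leq Z_{p\mathfrak{U}}(G)$, and then the classical minimal-prime argument --- the automorphism group of a chief factor of order $p$ has order $p-1$, coprime to $|E|$, so $E$ centralizes all its $pd$-chief factors; after killing $O_{p'}(E)$, Lemma \ref{3} gives $P\unlhd E$, and the stability-group theorem together with $C_E(O_p(E))\leq O_p(E)$ forces $E=P$ --- yields $p$-nilpotency. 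This is legitimate and non-circular: although Theorem \ref{3.2} is stated before Theorem \ref{3.3}, the proofs of Theorems \ref{3.3} and \ref{3.4} never invoke Theorem \ref{3.2}, so the expository inversion is harmless. What each approach buys: the paper's proof is shorter but leans entirely on the unpublished companion paper \cite{ZSM2}, whereas yours is self-contained in the present paper (modulo standard facts) and establishes the stronger intermediate conclusion $E\leq Z_{p\mathfrak{U}}(G)$ along the way. Your second and third paragraphs (the attempted direct induction) are dispensable: you correctly diagnose that Lemma \ref{2}'s coprimality restriction blocks the quotient step $G\to G/N$ when $N$ is a $p$-group, but since your first paragraph already closes the argument, that incomplete sketch costs nothing.
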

\begin{proof}
It follows directly from {{\cite[Theorem 1.8]{ZSM2}}}.
\end{proof}
With the useful theorems as necessary lemmas, we are now ready to prove our main theorems.
\begin{proof}[Proof of Theorem \ref{3.3}]
Suppose that the theorem is not true, and let $(G,N)$ be a pair of  counterexample with $|G| +|N|$ minimal.
\begin{itemize}
\item[\textbf{Step 1.}] $O_{p'} (N) =1$.
\end{itemize}
Assume that $O_{p'} (N) >1$. Then clearly $N / O_{p'} (N)$ is a normal subgroup of $G/O_{p'} (N)$, $P O_{p'} (N)/O_{p'} (N)$ is a Sylow $p$-subgroup of $G/O_{p'} (N)$. Since $(|O_{p'} (N)|,p)=1$, every cyclic subgroup of $PO_{p'} (N)/O_{p'} (N)$ of order $p$ or $4$ (if $P$ is a non-abelian $2$-group) can be written as $\langle x \rangle O_{p'} (N)/O_{p'} (N)$, where $x \in P$, and $o(x) =p$ or $4$ (if $P$ is a non-abelian $2$-group). By our hypothesis, $\langle x \rangle$ is an
$ICSPC$-subgroup of $G$. By Lemma \ref{2}, $\langle x \rangle O_{p'} (N)/O_{p'} (N)$ is an $ICSPC$-subgroup of $G/O_{p'} (N)$. Hence the pair $(G/O_{p'} (N),PO_{p'} (N)/O_{p'} (N))$ satisfies the hypothesis of the theorem. Therefore $N /O_{p'} (N) \leq Z_{p\mathfrak{U}} (G/O_{p'} (N)) = Z_{p\mathfrak{U}} (G)/O_{p'} (N)$ by the choice of $(G,N)$. Hence $N \leq Z_{p\mathfrak{U}} (G)$, a contradiction and so $O_{p'} (N) =1$.
\begin{itemize}
\item[\textbf{Step 2.}] $N=G$, and $O_{p'} (G) =1$.
\end{itemize}
Suppose that $N <G$. By lemma \ref{1} (2), every cyclic subgroup of $P$ of order $p$ or $4$ (if $P$ is a non-abelian $2$-group) is an
$ICSPC$-subgroup of $N$. Hence the pair $(N,N)$ satisfies the hypothesis, therefore $N \leq Z_{p\mathfrak{U}} (N)$, i.e. $N$ is $p$-supersolvable by the minimal choice of $(G,N)$. Then by Lemma \ref{3}, we conclude that $P \unlhd G$. Hence $P$ satisfies the hypothesis of Theorem \ref{3.1}. It follows that $P \leq Z_{\mathfrak{U}} (G) \leq Z_{p\mathfrak{U}} (G)$, and so $N \leq Z_{p\mathfrak{U}} (G)$, a contradiction. Thus we have $N=G$ and $O_{p'} (G)=1$.
\begin{itemize}
\item[\textbf{Step 3.}] $Z_{p\mathfrak{U}} (G)$ is the unique normal subgroup of $G$ such that $G/Z_{p\mathfrak{U}} (G)$ is a chief factor of $G$. Moreover, $O_p (G) =Z(G) =Z_{\mathfrak{U}} (G)$ is the unique  Sylow $p$-subgroup of  $Z_{p\mathfrak{U}} (G)$.
\end{itemize}
Now let $G/K$ be a chief factor of $G$. It is clear that the pair $(G,K)$ satisfies the hypothesis of the theorem. Therefore $K \leq Z_{p\mathfrak{U}} (G)$ by the minimal choice of $(G,N)$. Hence $K = Z_{p\mathfrak{U}} (G)$ since $G/K$ is a chief factor of $G$. Therefore $Z_{p\mathfrak{U}} (G)$ is the unique normal subgroup of $G$ such that $G/Z_{p\mathfrak{U}} (G)$ is a chief factor of $G$. Since $O_p (G)$ is a normal $p$-subgroup of $G$, it is clear that $O_p (G)$ satisfies the hypothesis of Theorem \ref{3.1}. Therefore we have $O_p (G) \leq Z_{\mathfrak{U}} (G) \leq Z_{p\mathfrak{U}} (G)$. Since $Z_{p\mathfrak{U}} (G)$ is $p$-supersolvable, $O_{p'} (Z_{p\mathfrak{U}} (G))=1$ as $Z_{p\mathfrak{U}} (G) \unlhd G$ and $O_{p'} (G) =1$, we conclude from Lemma \ref{3} that $ Z_{p\mathfrak{U}} (G)$ has a unique Sylow $p$-subgroup. Clearly this Sylow $p$-subgroup is $O_p (G)$. Suppose that $G^{\mathfrak{U}} <G$, then $G^{\mathfrak{U}} \leq Z_{p\mathfrak{U}} (G)$, and $G^{\mathfrak{U}} \cap O_p (G)$ is the unique Sylow $p$-subgroup of $G^{\mathfrak{U}}$. Let $P_0 = G^{\mathfrak{U}} \cap O_p (G)$. Since $(G/P_0)/(G^{\mathfrak{U}}/P_0)$ is supersolvable and $G^{\mathfrak{U}} /P_0$ is a $p'$-group, we get that $G/P_0$ is $p$-supersolvable. Since $P_0 \leq O_p (G) \leq  Z_{\mathfrak{U}} (G)$,
 we conclude that $G$ is $p$-supersolvable, a contradiction. Hence we have that $G^{\mathfrak{U}} =G$. By {{\cite[Chapter IV, Theorem 6.10]{DH}}}, we get that $[G^{\mathfrak{U}}, Z_{\mathfrak{U}} (G)]=1$. Thus we have that $Z_{\mathfrak{U}} (G) \leq Z(G)$. It follows from $O_{p'} (Z(G)) \leq O_{p'} (G)=1$ that $Z(G) \leq O_p (G)$. Hence it follows that $O_p (G) =Z(G) =Z_{\mathfrak{U}} (G)$.
 \begin{itemize}
\item[\textbf{Step 4.}] Final contradiction.
\end{itemize}
Suppose that $G'<G$ or $G^{\mathfrak{U}}<G$ holds. Then it indicates from (3) that $G'\leq Z_{p\mathfrak{U}} (G)$ or $G^{\mathfrak{U}}\leq Z_{p\mathfrak{U}} (G)$. This implies that $G \leq Z_{p\mathfrak{U}} (G)$, a contradiction.Thus we conclude that $G = G' = G^{\mathfrak{U}}$. Now we assume that $P$ is abelian. Then it follows from {{\cite[Chapter  VI, Theorem 14.3]{H1}}} that $Z(G)=1$. Hence by (2) and (3), we get that $Z_{p\mathfrak{U}} (G)=1$. This implies that $G$ is a simple group. Now let $x \in P$ of order $p$. If $\langle x \rangle$ is a strong $p$-$CAP$-subgroup of $G$, it follows from the simplicity of $G$ that $\langle x \rangle$ covers or avoids $G/1$. The former case suggests that $G = \langle x \rangle$, a contradiction to the fact that $G = G'$. The latter case suggests that $1 = \langle x \rangle$, a contradiction as well. Hence we have that $\langle x \rangle$ is not a strong $p$-$CAP$-subgroup of $G$, and so $\langle x \rangle_{spcG}<\langle x \rangle$, i.e. $\langle x \rangle_{spcG}\leq  \Phi(\langle x \rangle)$. By our hypothesis, $\langle x \rangle$ is an $ICSPC$-subgroup of $G$. Thus $\langle x \rangle \cap [\langle x \rangle,G] \leq \Phi(\langle x \rangle)$. Suppose that $[\langle x \rangle,G]=1$, then $\langle x \rangle \leq Z(G)$, a contradiction. Assume that $[\langle x \rangle,G]>1$, it follows again from the simplicity of $G$ that $[\langle x \rangle,G]=G$. Then $\langle x \rangle \cap [\langle x \rangle,G] = \langle x \rangle \leq \Phi(\langle x \rangle)$, a contradiction as well. Therefore $P$ is not abelian. By {{\cite[Chapter  VI, Theorem 5.5]{H1}}}, there exists $x \in P$ of order $p$ or $4$ such that $x \notin Z(G) = O_p (G)$. Then it is obvious that $O_p (G) < P$, hence $p\,|\,|G/Z_{p\mathfrak{U}} (G)|$. Suppose again that $\langle x \rangle$ is a strong $p$-$CAP$-subgroup of $G$. Then $\langle x \rangle$ covers or avoids $G/Z_{p\mathfrak{U}} (G)$. The former case indicates that $G = \langle x \rangle Z_{p\mathfrak{U}} (G)$, i.e. $G/Z_{p\mathfrak{U}} (G) \leq \langle x \rangle$. Hence $G$ is $p$-supersolvable, a contradiction. The latter case suggests that $\langle x \rangle \leq Z_{p\mathfrak{U}} (G)$, which yields that $x \in O_p (G) = Z(G) $, a contradiction as well. Therefore we get that $\langle x \rangle_{spcG}<\langle x \rangle$, i.e. $\langle x \rangle_{spcG}\leq  \Phi(\langle x \rangle)$. It is easy to find that $\langle x \rangle \not\leq Z_{p\mathfrak{U}} (G)$. Then it indicates from $G/Z_{p\mathfrak{U}} (G)$ is a chief factor of $G$ that $G = \langle x \rangle [ \langle x \rangle,G] Z_{p\mathfrak{U}} (G)$. It is easy to see that $[\langle x \rangle,G] \cap P \neq 1$. Suppose that $[\langle x \rangle,G]<G$. Then every cyclic subgroup of $[\langle x \rangle,G] \cap P$ of order $p$ or $4$ (if $[\langle x \rangle,G] \cap P$ is non-abelian) is an $ICSPC$-subgroup of $G$. Hence the pair $(G,[\langle x \rangle,G])$ satisfies our hypothesis, and so $[\langle x \rangle,G] \leq Z_{p\mathfrak{U}} (G)$ by the minimal choice of $(G,N)$. Thus it follows that $\langle x \rangle  Z_{p\mathfrak{U}} (G) = G$. Therefore we conclude that $G \leq Z_{p\mathfrak{U}} (G)$, a contradiction. Hence we get that $[\langle x \rangle,G]=G$, which asserts that $\langle x \rangle \cap [\langle x \rangle,G] = \langle x \rangle \leq \Phi(\langle x \rangle)$, a contradiction as well and the proof is complete.
\end{proof}
\begin{proof}[Proof of Theorem \ref{3.4}]
Suppose that the theorem is false, and let $(G,N)$ be a counterexample  with $|G|+|N|$ minimal. 
\begin{itemize}
\item[\textbf{Step 1.}] $O_{p'} (N)=1$, and $N=G$.
\end{itemize}
Assume that $O_{p'} (N) \neq 1$, it follows from Lemma \ref{1} (3) that  the pair $(G/O_{p'} (N), N/O_{p'} (N))$ satisfies the hypothesis of the theorem. Therefore $N /O_{p'} (N) \leq Z_{p\mathfrak{U}} (G/O_{p'} (N))=Z_{p\mathfrak{U}} (G)/O_{p'} (N) $ by the choice of $(G,N)$. Thus $N \leq Z_{p\mathfrak{U}} (G)$, a contradiction. Now assume that $N<G$. It indicates from Lemma \ref{1} (2) that the pair $(N,N)$ satisfies the hypothesis of the theorem. Hence $N$ is $p$-supersolvable by the choice of $(G,N)$. By Lemma \ref{3}, we get that $P \unlhd G$. Therefore $P$ satisfies the hypothesis of Theorem \ref{3.1} and so $P \leq Z_{\mathfrak{U}} (G) \leq Z_{p\mathfrak{U}} (G)$, which implies that $N \leq Z_{p\mathfrak{U}} (G)$, a contradiction. Thus we have $N=G$ and $O_{p'} (G)=1$. 
\begin{itemize}
\item[\textbf{Step 2.}] Let $K$ be a minimal normal subgroup of $G$. Then $G/K$ is $p$-supersolvable or $|G/K|_p =p$.
\end{itemize}
Now let $K$ be a minimal normal subgroup of $G$, and $M/K$ be a maximal subgroup of $PK/K$. Let $P_1 := M \cap P$. Then $M = M \cap PK = K (M \cap P) = P_1 K$, and $P \cap K = M \cap P \cap K = P_1 \cap K$. Hence we get that $|P:P_1| = |PK:M| = p$, i.e. $P_1$ is a maximal subgroup of $P$. Clearly $P_1 \cap K = P \cap K$ is a Sylow $p$-subgroup of $K$, then we have
\begin{align*}
|K \cap P_1 [P_1,G]|_p = |K|_p = |K \cap P_1| = |(K \cap P_1)(K \cap [P_1,G])|_p,
\end{align*}
and
\begin{align*}
|K \cap P_1 [P_1,G]|_{p'} = \frac{|K|_{p'}|P_1 [P_1,G]|_{p'}}{|K P_1 [P_1,G]|_{p'}} =\frac{|K|_{p'}|[P_1,G]|_{p'}}{|K[P_1,G]|_{p'}} =|K \cap  [P_1,G]|_{p'} =|(K \cap P_1)(K \cap [P_1,G])|_{p'}.
\end{align*}
Therefore we conclude that $K \cap P_1 [P_1,G] = (K \cap P_1)(K \cap [P_1,G])$. By Lemma \ref{5}, we assert that 
$$P_1 K \cap [P_1,G]K = (P_1 \cap [P_1,G])K. $$
Hence it follows that 
$$P_1 K / K \cap [P_1 K/K,G/K] = (P_1 K \cap [P_1 K,G])/K \leq (P_1 K \cap [P_1,G]K)/K = (P_1 \cap [P_1,G])K/K. $$
Since $P_1$ is an $ICSPC$-subgroup of $G$, there exists a strong $p$-$CAP$-subgroup $(P_1)_{spcG}$ of $G$ contained in $P_1$ such that $P_1 \cap [P_1, G] \leq (P_1)_{spcG}$. By Lemma \ref{1} (3), it yields that $(P_1)_{spcG} K/K$ is a strong $p$-$CAP$-subgroup of $G/K$. Therefore we have 
$$P_1 K / K \cap [P_1 K/K,G/K] \leq (P_1 K /K)_{spc(G/K)}.  $$
In other words, $P_1 K /K = M/K$ is an $ICSPC$-subgroup of $G/K$. By the randomness of $M/K$, we get that the pair $(G/K, PK/K)$ satisfies the hypothesis. Hence by the choice of $(G,N)$, it follows that $G/K \leq  Z_{p\mathfrak{U}} (G/K)$ or $|G/K|_p =p$, i.e. $G/K$ is $p$-supersolvable or $|G/K|_p =p$.
\begin{itemize}
\item[\textbf{Step 3.}] $K$ is the unique minimal normal subgroup of $G$.
\end{itemize}
Let $K_1$ and $K_2$ be two different minimal normal subgroups of $G$. Then it indicates from step 2 that $G/K_i$ is $p$-supersolvable or $|G/K_i|_p =p$ for $i=1,2$. Since $O_{p'} (G)=1$, it yields that $p\,|\,|K_i|$, $i=1,2$. Assume firstly that both $G/K_1$ and $G/K_2$ are $p$-supersolvable, then $G/(K_1 \cap K_2) \cong G$ is $p$-supersolvable, a contradiction. Now without loss of generality, we may assume that $G/K_1$ is $p$-supersolvable and $|G/K_2|_p =p$. Since $K_1 K_2 / K_1$ is a minimal normal subgroup of $G/K_1$ and $p\,|\,|K_2|$, it indicates from the $p$-supersolvability  of $G/K_1$ that $|K_2| = |K_1 K_2/K_1|=p$, and so $|P| = p^2$ since $|G/K_2|_p =p$. Also, we get from $K_1 \cap K_2 =1$ and $p\,|\,|K_1|$ that $|K_1|_p = p$. Now let $E = K_1 \cap P$, then clearly $|E|=p$, and $E$ is a maximal subgroup of $P$. Assume that $E_G \neq 1$ or $E_{spcG} \neq 1$, then $E_G=E$ or $E_{spcG} =E$. The first case indicates that $E$ is normal in $G$, hence $E = K_1$. It follows from $G/K_1$ is $p$-supersolvable that $G$ is $p$-supersolvable, a contradiction. The second case suggests that $E$ covers or avoids $K_1/1$ since $p\,|\,|K_1|$. Therefore $E K_1 = E$ or $E \cap K_1 =1$. The former one implies that $K_1 =E$, a clearly contradiction. The latter one suggests that $E=1$, a contradiction as well. Thus we get that $E_G = E_{spcG} =1$. Note that $E$ is a maximal subgroup of $P$, we conclude that $E$ satisfies the hypothesis and so $E \cap [E,G]=1$. Since $E^G = E[E,G] =K_1 >[E,G]$, we get that $[E,G]=1$ and so $E \leq Z(G)$, which implies that $E = K_1$, a contradiction. Finally, suppose that $|G/K_i|_p =p$, $i=1,2$. Then $|PK_i/K_i| =p = |P/(P \cap K_i)|$, $i=1,2$. It follows  from $(P \cap K_1 ) \cap (P \cap K_2)=1$ that $|P \cap K_1|=|P \cap K_2|=p$, and so $|P| = p^2$. Again, let $E_1 = P \cap K_1$. Suppose that $(E_1)_G \neq 1$ or $(E_1)_{spcG} \neq 1$, then $(E_1)_G = E_1$ or $(E_1)_{spcG} =E_1$. The first case indicates that $E_1 =K_1$. Then $|K_1|=p$, and so $K_1 K_2/K_2$ is a minimal normal subgroup of $G/K_2$ with order $p$. It follows from $|G/K_2|_p=p$ that $G/K_2$ is $p$-supersolvable. By similar argument, we derive a contradiction. The second case suggests that $K_1 =E_1$ or $E=1$. If $K_1 =E_1$, by similar argument as above, we conclude that 
 $G/K_2$ is $p$-supersolvable, and so a contradiction as well. Therefore $(E_1)_G = (E_1)_{spcG}=1$. Note that $E_1$ is a maximal subgroup of $P$, we conclude that $E_1$ satisfies the hypothesis and so $E_1 \cap [E_1,G]=1$. Since $(E_1)^G = E_1 [E_1 ,G] =K_1 >[E,G]$, we get that $[E_1,G]=1$ and so $E_1 \leq Z(G)$, which implies that $E_1 = K_1$. A similar argument yields that $G/K_2$ is $p$-supersolvable, a contradiction. Hence there is a unique minimal normal subgroup of $G$ and step 3 holds.
\begin{itemize}
\item[\textbf{Step 4.}] $K \not\leq \Phi (P)$.
\end{itemize}
Assume that $K \leq \Phi(P)$. Then $K \leq \Phi (G)$. Then we conclude from step 2 that $G/K$ is $p$-supersolvable or $|G/K|_p =p$. Suppose that the former case holds, then $G$ is $p$-supersolvable, a contradiction. Hence we get that $|G/K|_p =p$, and so $|P/K|=p$. This indicates that $P$ is cyclic, and $|K|=p$, $|P|=p^2$ since $K$ is the minimal normal subgroup of $G$. Now let $A/K = O_{p'} (G/K)$. Then we get that $A \cap P \leq K \leq \Phi (P)$. By {{\cite[Chapter IV, Theorem 4.7]{H1}}}, we have that $A$ is $p$-nilpotent. It follows from $O_{p'} (G)=1$ that $A$ is a $p$-group, hence $A=K$ and so $O_{p'} (G/K)=1$. Let $B/K$ be a minimal normal subgroup of $G/K$. Then it indicates from $O_{p'} (G/K)=1$ that $|B/K|_p=p$. Therefore $P \leq B$. Then the pair $(G,B)$ satisfies the hypothesis of the theorem. Suppose that $B<G$, then $B \leq Z_{p\mathfrak{U}} (G)$ or $|B|_p =p$ by the minimal choice of $(G,N)$. Since $|B|_p=p^2$, we  assert that $B \leq Z_{p\mathfrak{U}} (G)$. It is obvious that $G/B$ is a $p'$-group, hence $G$ is $p$-supersolvable, a contradiction. Thus $B=G$ and so $G/K$ is a non-abelian simple group. Since $|K|=p$, it yields that $G/C_G (K) \lesssim {\rm Aut} (K)$ is abelian by N-C Theorem. Note that $G/C_G (K) \cong (G/K)/(C_G (K)/K)$, it follows from the simplicity of $G/K$ that $G =C_G(K)$ and so $K \leq Z(G)$. By {{\cite[Chapter VI, Theorem 14.3]{H1}}}, we get that $1 <K \leq G' \cap P \cap Z(G)$, a contradiction. Thus step 4 holds.
\begin{itemize}
\item[\textbf{Step 5.}] Final contradiction.
\end{itemize}
Suppose that $O_p (G)=1$. Let $P_1$ be a maximal subgroup of $P$ containing $K_p = P \cap K$. By our hypothesis, we have $P_1 \cap [P_1,G] \leq (P_1)_{spcG}$. Therefore we get that $(P_1)_{spcG}$ covers or avoids $K/1$, which implies that $(P_1)_{spcG} \geq K$ or $K \cap (P_1)_{spcG}=1$. The former one indicates that $K$ is a $p$-group, a contradiction. Hence we conclude that $K \cap (P_1)_{spcG}=1$. Suppose that $[P_1,G]=1$, then $P_1 \unlhd G$, impossible. It follows directly that $K \leq [P_1,G]$, and so $P \cap K \leq P_1 \cap [P_1 , G] \leq  (P_1)_{spcG}$. Thus we get that $P \cap K = P \cap K \cap (P_1)_{spcG} = P \cap 1 =1$, a contradiction to the fact that $p\,|\,|K|$. Thus $O_p (G)>1$, and so $K \leq O_p (G)$ by step 3. From step 2, we have that $G/K$ is $p$-supersolvable or $|G/K|_p=p$. If the first case holds, then we have $\Phi (G)=1$. Hence $G =K \rtimes M$, where $M$ is a maximal subgroup of $G$, and $P = KM_p$, where $M_p = P \cap M$. Let $P_1$ be a maximal subgroup of $P$ containing $M_p$, then we get $P = K P_1$. Since $P_1$ satisfies our hypothesis, $P_1 \cap [P_1, G] \leq (P_1)_{spcG}$. It follows that $(P_1)_{spcG}$ covers or avoids $K/1$, which implies that $(P_1)_{spcG} \geq K$ or $K \cap (P_1)_{spcG}=1$. The first case suggests that $P_1 \geq (P_1)_{spcG} \geq K$, and so $P = P_1$, a contradiction. Now we claim that $[P_1, G] >1$. Assume that $[P_1, G]=1$, then $P_1 \leq Z(G)$. Suppose that $P_1 \cap K=1$, then $K$ is not the unique minimal normal subgroup, impossible. If $P_1 \cap K>1$, then $K \leq P_1$ and so $P=P_1$,  a contradiction hence $[P_1, G] >1$, i.e. $[P_1, G] \geq K$. Therefore the second case shows that $P_1 \cap K \leq P_1 \cap [P_1, G] \leq (P_1)_{spcG}$, which yields that $P_1 \cap K = P_1 \cap K \cap (P_1)_{spcG} =1$. By $P =KP_1$, we conclude that $|K|=p$, which indicates that $G$ is $p$-supersolvable, impossible. Hence $|G/K|_p=p$, and we may assume that $G/K$ is not $p$-supersolvable. By step 4, there exists a maximal subgroup $P_2$ such that $P = KP_2$. Using the similar argument as above, we conclude that $P_2 \cap K = 1$ and $|K|=p$, which implies that $|P|=p^2$. Now assume that every maximal subgroup $P_0$ of $P$ which is not equal to $K$ has the property that $P_0 \leq [P_0,G]$. Then every maximal subgroup $P_0$ of $P$ which is not equal to $K$ is a strong $p$-$CAP$-subgroup of $G$ since $P_0 \cap [P_0,G] = P_0 \leq (P_0)_{spcG}$. As $|P|=p^2$ and $P=P_2 K$ with $|P_2|=|K|=p$, it  indicates that $P$ is not cyclic. By the fact that $K$ is normal in $G$, we conclude from Lemma \ref{1} (1) that every cyclic subgroup of $P$ of order $p$ or $4$ is a strong $p$-$CAP$-subgroup of $G$. Therefore we conclude that $(G,G)$ satisfies the hypothesis of  Theorem \ref{3.3}. Then $G \leq Z_{p\mathfrak{U}} (G)$, a contradiction. Hence there exists a maximal subgroup $P_3$ of $P$ which is not equal to $K$ such that $P_3 \cap [P_3,G]=1$. Therefore we assert that $[P_3,G] <G$. Now suppose that $(P_3)^G = G$. Then $P_3 [P_3,G]=G$, and so $|G/[P_3,G]|=p$. Since $K \leq [P_3,G]$, we get that $[P_3,G]/K$ is a $p'$-group, which implies that $G/K$ is $p$-supersolvable, a contradiction. Therefore we have $P \leq (P_3) ^G <G$. It is easy to find that the pair $(G,(P_3)^G)$ satisfies the hypothesis of the theorem, hence it indicates that $(P_3)^G$ is $p$-supersolvable or $|(P_3)^G|_p = p$. The latter case yields that $|P|=p$, a contradiction. Thus $(P_3)^G$ is $p$-supersolvable. Since $G/(P_3)^G$ is a $p'$-group, we conclude that $G$ is $p$-solvable and so $G/K$ is $p$-solvable. This implies that $G/K$ is $p$-supersolvable since $|G/K|_p=p$, a contradiction. Hence the proof in complete.
\end{proof}
\!\!\!\!\!\!\!\!\!\textbf{Declarations of interest:} none.

\end{document}